\newtheorem{theorem}{Theorem}[section]
\newtheorem{proposition}[theorem]{Proposition}
\theoremstyle{definition}
\theoremstyle{remark}
\numberwithin{equation}{section}
\def\s{\square}
\begin{document}
\setcounter{page}{1}

\title[Character amenability and character inner amenability]
{Character amenability and character inner amenability of morphism product of Banach algebras}

\author[F. Abtahi, A. Ghafarpanah and A. Rejali]{F. Abtahi, A. Ghafarpanah and A. Rejali}

\address{$^{1}$ Department of Mathematics, University of Isfahan, P. O. Box 81746-73441, Isfahan, Iran.}
\email{\textcolor[rgb]{0.00,0.00,0.84}{f.abtahi@sci.ui.ac.ir;
abtahif2002@yahoo.com}}

\address{$^{2}$ Department of Mathematics, University of Isfahan, P. O. Box 81746-73441, Isfahan, Iran.}
\email{\textcolor[rgb]{0.00,0.00,0.84}{ghafarpanah@sci.ui.ac.ir;
ghafarpanah2002@gmail.com}}

\address{$^{2}$ Department of Mathematics, University of Isfahan, P. O. Box 81746-73441, Isfahan, Iran.}
\email{\textcolor[rgb]{0.00,0.00,0.84}{rejali@sci.ui.ac.ir}}

\keywords{Amenability, character amenability, character inner
amenability, weak amenability, $\theta$-Lau product.}

\subjclass[2010]{Primary 46H05, Secondary .}

\maketitle

\begin{abstract}
Let $T$ be a Banach algebra homomorphism from a Banach algebra
$\mathcal B$ to a Banach algebra $\mathcal A$ with $\|T\|\leq 1$.
Recently it has been obtained some results about Arens regularity
and also various notions of amenability of $\mathcal
A\times_{T}\mathcal B$, in the case where $\mathcal A$ is
commutative. In the present paper, most of these results have been
generalized and proved for an arbitrary Banach algebra $\mathcal
A$.
\end{abstract}

\maketitle

\section{\bf Introduction}

Let ${\mathcal A}$ and ${\mathcal B}$ be two Banach algebras and
let $T\in hom(\mathcal B,\mathcal A)$, the set consisting of all
Banach algebra homomorphisms from ${\mathcal B}$ into ${\mathcal
A}$ with $\|T\|\leq 1$. Following \cite{AGR} and \cite{BD}, the
Cartesian product space $\mathcal A\times\mathcal B$ equipped with
the following algebra multiplication
\begin{equation}\label{e7}
(a_1,b_1)\cdot(a_2,b_2)=(a_1a_2+a_1T(b_2)+T(b_1)a_2,b_1b_2), \ \ \
(a_1,a_2\in {\mathcal A}, b_1,b_2\in {\mathcal B})
\end{equation}
and the norm
$$\|(a,b)\|=\|a\|_{\mathcal A}+\|b\|_{\mathcal B},$$
is a Banach algebra which is denoted by ${\mathcal A}\times_T
{\mathcal B}$. Note that $\mathcal A$ is a closed ideal of
$\mathcal A\times_{T}\mathcal B$ and $(\mathcal
A\times_{T}\mathcal B)/{\mathcal A}$ is isometrically isomorphic
to $\mathcal B$. As we mentioned in \cite{AGR}, our definition of
the multiplication $\times_T$, is presented by a slight difference
with that given by Bhatt and Dabhi \cite{BD}. In fact they give
the definition under the assumption of commutativity of $\mathcal
A$. However this assumption is redundant, and the definition can
be provided as \eqref{e7}, for an arbitrary Banach algebra
$\mathcal A$.

Suppose that ${\mathcal A}$ is unital with the unit element $e$
and $\varphi : {\mathcal B}\to \mathbb{C}$ is a multiplicative
continuous linear functional. Define $\theta: {\mathcal B}\to
{\mathcal A}$ by $\theta(b)=\varphi(b) e$ $(b\in {\mathcal B})$.
As it is mentioned in \cite{BD}, the above introduced product
$\times_{\theta}$ with respect to $\theta$, coincides with
$\theta-$Lau product of $\mathcal A$ and $\mathcal B$,
investigated by Lau \cite{L} for the certain classes of Banach
algebras. This definition was extended by M. Sangani Monfared
\cite{S1}, for the general case.

Bhatt and  Dabhi \cite{BD} studied Arens regularity and
amenability of ${\mathcal A}\times_T {\mathcal B}$. In fact, they
proved that Arens regularity as well as amenability (together with
its various avatars) of ${\mathcal A}\times_T {\mathcal B}$ are
stable with respect to $T$. Moreover, in a recent work \cite{AGR},
we verified biprojectivity and biflatness of ${\mathcal A}\times_T
{\mathcal B}$, with respect to our definition \eqref{e7}, and
showed that both are stable with respect to $T$. Finally, as an
application of these results, we obtained that ${\mathcal
A}\times_T {\mathcal B}$ is amenable (respectively, contractible)
if and only if both $\mathcal A$ and $\mathcal B$ so are. In fact
we proved \cite[Theorem 4.1, Part (1)]{BD} for the case where
$\mathcal A$ is not necessarily commutative.

The aim of the present work is investigating the results of
\cite{BD}, with respect to our definition of $\times_T$, and in
fact whenever $\mathcal A$ and $\mathcal B$ are arbitrary Banach
algebras. We first study the relation between left (right)
topological centers ${({\mathcal A}\times_T {\mathcal B})}''$,
${\mathcal A}''$ and ${\mathcal B}''$, and as an important result
we prove that if $T$ is epimorphism, then Arens regularity of
${({\mathcal A}\times_T {\mathcal B})}$ is stable with respect to
$T$. In fact we prove that ${({\mathcal A}\times_T {\mathcal B})}$
is Arens regular if and only if both $\mathcal A$ and $\mathcal B$
so are. Moreover, we investigate some of the known results about
$\theta-$Lau product of the Banach algebras $\mathcal A$ and
$\mathcal B$, given in \cite[Proposition 2.8]{SM}, for the
morphism product ${\mathcal A}\times_T {\mathcal B}$. Furthermore
we study some notions of amenability for ${\mathcal A}\times_T
{\mathcal B}$, which have been studied in \cite{BD}. We show that
weak amenability as well as character amenability and also
character inner amenability of ${\mathcal A}\times_T {\mathcal B}$
are stable with respect to $T$. In fact we prove that ${\mathcal
A}\times_T {\mathcal B}$ is weakly amenable (respectively,
character amenable, character inner amenable) if and only if both
$\mathcal A$ and $\mathcal B$ so are. All of these results are
generalizations of those, discussed in \cite{BD}.

\section{\bf Preliminaries}

Let $\mathcal A$ and $\mathcal B$ be Banach algebras and $T\in
hom(\mathcal B,\mathcal A)$. Let ${\mathcal A}'$ and ${\mathcal
A}''$ be the dual and second dual Banach spaces, respectively. Let
$a\in\mathcal A$, $f\in {\mathcal A}'$ and $\Phi,\Psi\in {\mathcal
A}''$. Then $f\cdot a$ and $a\cdot f$ are defined as $f\cdot
a(x)=f(ax)$ and $a\cdot f(x)=f(xa)$, for all $x\in\mathcal A$,
making ${\mathcal A}'$ an $\mathcal A-$bimodule. Moreover for all
$f\in {\mathcal A}'$ and $\Phi\in {\mathcal A}''$, we define
$\Phi\cdot f$ and $f\cdot\Phi$ as the elements ${\mathcal A}'$ by
$$
\langle\Phi\cdot f,a\rangle=\langle\Phi,f\cdot a\rangle\;\;and\;\;
\langle f\cdot \Phi,a\rangle=\langle\Phi,a\cdot
f\rangle\;\;\;\;\;\;\;\;\;\;\;(a\in\mathcal A).
$$
This defines two Arens products $\square$ and $\lozenge$ on
${\mathcal A}''$ as
$$
\langle\Phi\square\Psi,f\rangle=\langle\Phi,\Psi\cdot
f\rangle\;\;and\;\;
\langle\Phi\lozenge\Psi,f\rangle=\langle\Psi,f\cdot \Phi\rangle,
$$
making ${\mathcal A}''$ a Banach algebra with each. The products
$\square$ and $\lozenge$ are called respectively, the first and
second Arens products on ${\mathcal A}''$. Note that $\mathcal A$
is embedded in its second dual via the identification
$$
\langle a,f\rangle=\langle f,a\rangle\;\;\;\;\;\;\;\;(f\in
{\mathcal A}').
$$
Also for all $a\in\mathcal A$ and $\Phi\in {\mathcal A}''$, we
have
$$
a\square\Phi=a\lozenge\Phi\;\;\;\;and\;\;\;\;\Phi\square
a=\Phi\lozenge a.
$$
The left and right topological centers of ${\mathcal A}''$ are
defined as
$$
\mathcal{Z}_t^{(\ell)}({\mathcal A}'')=\{\Phi\in {\mathcal A}'':\;
\Phi\square\Psi=\Phi\lozenge\Psi,\;(\Psi\in {\mathcal A}'')\}
$$
and
$$
\mathcal{Z}_t^{(r)}({\mathcal A}'')=\{\Phi\in {\mathcal A}'':\;
\Psi\square\Phi=\Psi\lozenge\Phi,\;(\Psi\in {\mathcal A}'')\}.
$$
The algebra $\mathcal A$ is called Arens regular if these products
coincide on ${\mathcal A}''$; or equivalently
$\mathcal{Z}_t^{(\ell)}({\mathcal
A}'')=\mathcal{Z}_t^{(r)}({\mathcal A}'')={\mathcal A}''$.

Now consider ${\mathcal A}\times_T {\mathcal B}$. As we mentioned
in \cite{AGR}, the dual space $({\mathcal A}\times_T {\mathcal
B})'$ can be identified with ${\mathcal A}'\times {\mathcal B}'$,
via the linear map $\theta : {\mathcal A}'\times {\mathcal B}'\to
({\mathcal A}\times_T {\mathcal B})'$, defined by
$$\langle (a,b),\theta((f,g))\rangle= \langle a,f \rangle+\langle
b,g \rangle,$$ where $a\in {\mathcal A}, f\in {\mathcal A}', b\in
{\mathcal B}$ and $g\in {\mathcal B}'$. Moreover, $({\mathcal
A}\times_T {\mathcal B})'$ is a $({\mathcal A}\times_T {\mathcal
B})$-bimodule with natural module actions of $A\times_T B$ on its
dual. In fact it is easily verified that
$$
(f,g)\cdot (a,b)=(f\cdot a + f\cdot T(b), f\circ(L_aT)+g\cdot b)
$$
and
$$
(a,b)\cdot (f,g)=(a\cdot f+T(b)\cdot f, f\circ(R_aT)+b\cdot g),
$$
where $a\in {\mathcal A}, b\in {\mathcal B}, f\in {\mathcal A}'$
and $g\in {\mathcal B}'$. In addition, $L_aT : {\mathcal B}\to
{\mathcal A}$ and $R_aT : {\mathcal B}\to {\mathcal A}$ are
defined as $L_aT (y)=aT(y)$ and $R_aT(y)=T(y)a$, for each $y\in
{\mathcal B}$. Furthermore, ${\mathcal A}\times_T {\mathcal B}$ is
a Banach ${\mathcal A}$-bimodule under the module actions
$$c\cdot(a,b):=(c,0)\cdot(a,b)\;\; and\;\; (a,b)\cdot c :=(a,b)\cdot(c,0),$$
for all $a,c\in {\mathcal A}$ and $b\in {\mathcal B}$. Also
${\mathcal A}\times_T {\mathcal B}$ can be made into a
Banach ${\mathcal B}$-bimodule in a similar fashion.\\

\section{\bf Arens regularity}

Let $\mathcal A$ and $\mathcal B$ be Banach algebras and $T\in
hom(\mathcal B,\mathcal A)$. Define $T':
\mathcal{A}'\to\mathcal{B}'$, by $T'(f)=f\circ T$ and $T'':
\mathcal{B}''\to\mathcal{A}''$, as $T''(F)=F\circ T'$. Then by
\cite[Page 251]{D}, both
$$
T'': (\mathcal{B}'',\square)\to(\mathcal{A}'',\square)
$$
and
$$
T'': (\mathcal{B}'',\lozenge)\to(\mathcal{A}'',\lozenge)
$$
are continuous Banach algebra homomorphisms. Also in both the
cases, $\|T''\|\leq 1$. Moreover if $T$ is epimorphism, then so is
$T''$. It is easy to obtain that $T''(b)=T(b)$, for each
$b\in\mathcal B$.

In this section we investigate the results of the third section of
\cite{BD}, with respect to the definition \eqref{e7}, and for the
case where $\mathcal A$ is not necessarily commutative. We
commence with the following proposition, which shows that Part (1)
of \cite[Theorem 3.1]{BD} is established, under our assumptions.

\begin{proposition}
Let $\mathcal A$ and $\mathcal B$ be Banach algebras and $T\in
hom(\mathcal B,\mathcal A)$. Moreover suppose that
$\mathcal{A}''$, $\mathcal{B}''$ and ${(\mathcal A\times_T\mathcal
B)}''$ are equipped with their first (respectively, second) Arens
products. Then ${\mathcal A}''\times_{T''}{\mathcal B}''\cong
{(\mathcal A\times_T\mathcal B)}''$, as isometric isomorphism.
\end{proposition}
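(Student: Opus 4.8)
My plan is to construct an explicit linear bijection $\Lambda : {(\mathcal A\times_T\mathcal B)}'' \to {\mathcal A}''\times_{T''}{\mathcal B}''$ and verify it is an isometric algebra isomorphism. The starting point is the identification $({\mathcal A}\times_T {\mathcal B})'\cong {\mathcal A}'\times {\mathcal B}'$ recorded in the Preliminaries via the map $\theta$. Dualizing $\theta$ gives a linear homeomorphism between the second duals; concretely, every $\mathfrak F\in{(\mathcal A\times_T\mathcal B)}''$ is determined by its action on pairs $(f,g)\in{\mathcal A}'\times{\mathcal B}'$, and I would define $\Lambda(\mathfrak F)=(\Phi,\Psi)$ where $\langle\Phi,f\rangle=\langle\mathfrak F,(f,0)\rangle$ and $\langle\Psi,g\rangle=\langle\mathfrak F,(0,g)\rangle$. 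Since the norm on ${\mathcal A}'\times{\mathcal B}'$ dual to $\|(a,b)\|=\|a\|+\|b\|$ is the max-norm, while the norm on ${\mathcal A}''\times_{T''}{\mathcal B}''$ is $\|\Phi\|+\|\Psi\|$, I must check at the outset that $\theta$ is in fact an isometry for the correct choice of norm on the predual pairing so that $\Lambda$ comes out isometric; this is a bookkeeping point I would settle before anything else.

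**The module actions and the two weak-* limits.**
The real content is showing $\Lambda$ is multiplicative for the first Arens product (the second being symmetric). First I would compute the dual and second-dual module actions explicitly, building on the formulas for $(f,g)\cdot(a,b)$ and $(a,b)\cdot(f,g)$ already given in the Preliminaries. The key technical step is to lift these bimodule actions one level further: for $\mathfrak F,\mathfrak G\in{(\mathcal A\times_T\mathcal B)}''$ I need a clean formula for $\langle\mathfrak F\,\square\,\mathfrak G,(f,g)\rangle$ in terms of the components $(\Phi_1,\Psi_1)=\Lambda(\mathfrak F)$ and $(\Phi_2,\Psi_2)=\Lambda(\mathfrak G)$. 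The expected answer is exactly the multiplication \eqref{e7} transported to the bidual, namely
$$
\Lambda(\mathfrak F\,\square\,\mathfrak G)=\bigl(\Phi_1\,\square\,\Phi_2+\Phi_1\,\square\,T''(\Psi_2)+T''(\Psi_1)\,\square\,\Phi_2,\;\Psi_1\,\square\,\Psi_2\bigr),
$$
which is precisely the product in ${\mathcal A}''\times_{T''}{\mathcal B}''$. The crucial input making the cross terms come out as $T''(\Psi)$ is the identity $T''(b)=T(b)$ for $b\in\mathcal B$ noted just above, together with the fact, cited from \cite[Page 251]{D}, that $T''$ is an Arens-product homomorphism; these let me pass the homomorphism through the weak-* limits.

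**The main obstacle.**
The hardest part will be the honest verification of the cross terms. The Arens product is defined by two successive weak-* limits (first extending the module action of $\mathcal B$ on ${\mathcal B}'$ to ${\mathcal B}''$, then the outer one), and in the product algebra these limits entangle the $\mathcal A$-component and the $\mathcal B$-component through the maps $L_aT$, $R_aT$, and $T'$. I expect to approximate $\mathfrak F$ and $\mathfrak G$ by bounded nets in $\mathcal A\times_T\mathcal B$ converging weak-*, push the multiplication \eqref{e7} through the pairing, and then interchange the limits with the continuous map $T''$. The care required is exactly in justifying that the term $f\circ(L_aT)$ (respectively $f\circ(R_aT)$) in the dual action produces, after both limits, the terms $\Phi_1\,\square\,T''(\Psi_2)$ and $T''(\Psi_1)\,\square\,\Phi_2$ rather than some twisted variant; separate continuity of the first Arens product in its first variable is what lets the computation close. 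Once the multiplicativity formula is established, that $\Lambda$ is a bijective isometry is routine from the first paragraph, and the second-Arens-product case follows by the identical argument with $\square$ replaced by $\lozenge$.
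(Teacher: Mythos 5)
Your proposal is correct, but it verifies multiplicativity by a genuinely different mechanism than the paper. The paper's proof is a purely algebraic pairing computation: it defines $\Theta(\Phi,\Psi)(f,g)=\Phi(f)+\Psi(g)$ (the inverse of your $\Lambda$), computes the dual module action $(f,g)\cdot(a,b)$, then $\Theta(\Phi,\Psi)\cdot(f,g)=(\Phi\cdot f+T''(\Psi)\cdot f,\,T'(\Phi\cdot f)+\Psi\cdot g)$, and finally unwinds $\langle\Theta(\Phi_1,\Psi_1)\,\square\,\Theta(\Phi_2,\Psi_2),(f,g)\rangle$ through the definition $\langle\Phi\,\square\,\Psi,f\rangle=\langle\Phi,\Psi\cdot f\rangle$ --- no nets, no limits, no continuity lemmas beyond the definitions. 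You instead propose the iterated weak-$*$ limit route: approximate both biduals by nets via Goldstine, push the multiplication \eqref{e7} through the pairing, and interchange limits using separate weak-$*$ continuity of $\square$ and weak-$*$ continuity of $T''$ (as the adjoint of $T'$). Your route works and does produce the cross terms $\Phi_1\,\square\,T''(\Psi_2)$ and $T''(\Psi_1)\,\square\,\Phi_2$ correctly, but note that besides continuity of $\Phi\mapsto\Phi\,\square\,\Psi$ you also need weak-$*$ continuity of $\Psi\mapsto a\,\square\,\Psi$ for fixed $a$ in the algebra (this is what makes the inner limit legitimate), and "continuous map $T''$" must mean weak-$*$-to-weak-$*$ continuous; your computation would collapse for the \emph{wrong} iteration order, which is exactly why the two Arens products differ, so the bookkeeping of which limit is taken first is not optional. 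Your norm concern resolves positively just as you suspect: the dual of the $\ell^1$-sum carries the max-norm, whose dual is again the $\ell^1$-sum, so $\Lambda$ is isometric onto $\mathcal{A}''\times_{T''}\mathcal{B}''$ with norm $\|\Phi\|+\|\Psi\|$ (the paper simply asserts this step as easy). What the paper's approach buys is a short, self-contained calculation valid verbatim for both products; what yours buys is conceptual transparency --- the bidual product is visibly the weak-$*$ limit of the product \eqref{e7} --- at the cost of invoking the limit-interchange machinery.
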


\begin{proof}
Define
$$
\Theta: {\mathcal A}''\times_{T''}{\mathcal
B}''\rightarrow{(\mathcal A\times_T\mathcal B)}''
$$
by $$\Theta(\Phi,\Psi)(f,g)=\Phi(f)+\Psi(g),$$ for all $\Phi\in
{\mathcal A}''$, $\Psi\in{\mathcal B}''$ and $f\in {\mathcal A}'$
and $g\in {\mathcal B}'$. It is easy to obtain that $\Theta$ is a
bijective isometric linear map. We show that $\Theta$ is an
algebraic homomorphism with both Arens products. By \eqref{e7},
the product on ${\mathcal A}''\times_{T''}{\mathcal B}''$ with
respect to the first Arens product is as
$$
(\Phi_1,\Psi_1)(\Phi_2,\Psi_2)=(\Phi_1\square\Phi_2+\Phi_1\square\;T''(\Psi_2)+T''(\Psi_1)\square\Phi_2,\Psi_1\square\Psi_2),
$$
where $(\Phi_1,\Psi_1), (\Phi_2,\Psi_2)\in {\mathcal
A}''\times_{T''}{\mathcal B}''$. We show that
$$
\Theta((\Phi_1,\Psi_1)(\Phi_2,\Psi_2))=\Theta(\Phi_1,\Psi_1)\square\Theta(\Phi_2,\Psi_2).
$$
We compute the Arens product $\square$ on ${(\mathcal
A\times_T\mathcal B)}''$. For all $(a,b)\in\mathcal
A\times_T\mathcal B$ and $(f,g)\in {\mathcal A}'\times{\mathcal
B}'$ we have
$$
(f,g)\cdot (a,b)=(f\cdot a+f\cdot T(b),T'(f\cdot a)+g\cdot b)
$$
Also for $(\Phi,\Psi)\in {\mathcal A}''\times_{T''}{\mathcal
B}''$, $(f,g)\in {\mathcal A}'\times{\mathcal B}'$ and all
$(a,b)\in\mathcal A\times_T\mathcal B$ we have
\begin{eqnarray*}
(\Theta(\Phi,\Psi)\cdot (f,g))(a,b)&=&\Theta(\Phi,\Psi)((f,g)\cdot (a,b))\\
&=&\Theta(\Phi,\Psi)((f\cdot a+f\cdot T(b),T'(f\cdot a)+g\cdot b))\\
&=&\Phi(f\cdot a+f\cdot T(b))+\Psi(T'(f\cdot a)+g\cdot b)\\
&=&\Phi(f\cdot a)+\Phi(f\cdot T(b))+T''(\Psi)(f\cdot a)+\Psi(g\cdot b)\\
&=&(\Phi\cdot f+T''(\Psi)\cdot f)(a)+(\Phi\cdot f)(T(b))+(\Psi\cdot g)(b)\\
&=&(\Phi\cdot f+T''(\Psi)\cdot f)(a)+(T'(\Phi\cdot f)+\Psi\cdot g)(b)\\
&=&(\Phi\cdot f+T''(\Psi)\cdot f,T'(\Phi\cdot f)+\Psi\cdot
g)(a,b).
\end{eqnarray*}
Thus
$$
\Theta(\Phi,\Psi)\cdot (f,g)=(\Phi\cdot f+T''(\Psi)\cdot
f,T'(\Phi\cdot f)+\Psi\cdot g).
$$
Consequently for $(\Phi_1,\Psi_1),(\Phi_2,\Psi_2)\in {\mathcal
A}''\times_{T''}{\mathcal B}''$ and all $(f,g)\in {\mathcal
A}'\times{\mathcal B}'$ we obtain
\begin{eqnarray*}
[\Theta(\Phi_1,\Psi_1)\square\Theta(\Phi_2,\Psi_2)](f,g)&=&\langle(\Theta(\Phi_1,\Psi_1),\Theta(\Phi_2,\Psi_2)\cdot
(f,g)\rangle\\
&=&\langle\Theta(\Phi_1,\Psi_1),(\Phi_2\cdot f+T''(\Psi_2)\cdot f,T'(\Phi_2\cdot f)+\Psi_2\cdot g)\rangle\\
&=&\Phi_1(\Phi_2\cdot f+T''(\Psi_2)\cdot f)+\Psi_1(T'(\Phi_2\cdot f)+\Psi_2\cdot g)\\
&=&\Phi_1(\Phi_2\cdot f+T''(\Psi_2)\cdot f)+T''(\Psi_1)(\Phi_2\cdot f)+\Psi_1(\Psi_2\cdot g)\\
&=&(\Phi_1\square\Phi_2+\Phi_1\square\;
T''(\Psi_2)+T''(\Psi_1)\square\Phi_2)(f)+(\Psi_1\square\Psi_2)(g)\\
&=&(\Phi_1\square\Phi_2+\Phi_1\square\;
T''(\Psi_2)+T''(\Psi_1)\square\Phi_2,\Psi_1\square\Psi_2)(f,g)\\
&=&[(\Phi_1,\Psi_1)(\Phi_2,\Psi_2)](f,g),
\end{eqnarray*}
As claimed. Thus $\Theta$ is an algebraic homomorphism with the
first Arens product. Similarly one can show that for all
$(a,b)\in\mathcal A\times_T\mathcal B$, $(f,g)\in {\mathcal
A}'\times{\mathcal B}'$ and $(\Phi,\Psi)\in {\mathcal
A}''\times_{T''}{\mathcal B}''$
$$(a,b)\cdot (f,g)=(a\cdot f+T(b)\cdot f,T'(a\cdot f)+b\cdot g)$$
and
$$(f,g)\cdot \Theta(\Phi,\Psi)=(f\cdot\Phi+f\cdot T''(\Psi),T'(f\cdot \Phi)+g\cdot\Psi).$$
Moreover for all $(\Phi_1,\Psi_1),(\Phi_2,\Psi_2)\in {\mathcal
A}''\times_{T''}{\mathcal B}''$, analogously we obtain
\begin{eqnarray*}
\Theta(\Phi_1,\Psi_1)\lozenge\Theta(\Phi_2,\Psi_2)&=&(\Phi_1\lozenge\Phi_2+\Phi_1\lozenge\;
T''(\Psi_2)+T''(\Psi_1)\lozenge\Phi_2,\Psi_1\lozenge\Psi_2)\\
&=&(\Phi_1,\Psi_1)(\Phi_2,\Psi_2),
\end{eqnarray*}
when we consider ${\mathcal A}''\times_{T''}{\mathcal B}''$ with
the second Arens product. Consequently $\Theta$ is also an
algebraic homomorphism, with respect to the second Arens product.
\end{proof}

In the next theorem, we investigate Part (2) of \cite[Theorem
3.1]{BD}, for an arbitrary Banach algebra $\mathcal A$. We present
our proof only for the left topological center. Calculations and
results for the right version are analogous.

\begin{theorem}
Let $\mathcal{A}$ and $\mathcal{B}$ be Banach algebras and $T\in
hom(\mathcal B,\mathcal A).$
\begin{itemize}
\item[(i)] If
$(\Phi,\Psi)\in\mathcal{Z}_t^{(\ell)}((\mathcal{A}\times_T\mathcal{B})'')$,
then $(\Phi+T''(\Psi),
\Psi)\in\mathcal{Z}_t^{(\ell)}(\mathcal{A}'')\times_{T''}\mathcal{Z}_t^{(\ell)}(\mathcal{B}'')$.
\item[(ii)] If $(\Phi,
\Psi)\in\mathcal{Z}_t^{(\ell)}(\mathcal{A}'')\times_{T''}\mathcal{Z}_t^{(\ell)}(\mathcal{B}'')$,
then
$(\Phi-T''(\Psi),\Psi)\in\mathcal{Z}_t^{(\ell)}((\mathcal{A}\times_T\mathcal{B})'')$.
\item[(iii)] If $T$ is epimorphism, then
$\mathcal{Z}_t^{(\ell)}((\mathcal{A}\times_T\mathcal{B})'')=
\mathcal{Z}_t^{(\ell)}(\mathcal{A}'')\times_{T''}\mathcal{Z}_t^{(\ell)}(\mathcal{B}'')$.
In particular, $\mathcal{A}\times_T\mathcal{B}$ is Arens regular
if and only if both $\mathcal{A}$ and $\mathcal{B}$ are Arens
regular.
\end{itemize}
\end{theorem}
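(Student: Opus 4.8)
The plan is to work throughout inside $\mathcal{A}''\times_{T''}\mathcal{B}''$, using the isometric isomorphism $\Theta$ of the preceding proposition to identify this algebra with $(\mathcal A\times_T\mathcal B)''$ and to transport both Arens products. Under this identification the first Arens product reads
$$(\Phi,\Psi)\square(\Phi',\Psi')=(\Phi\square\Phi'+\Phi\square T''(\Psi')+T''(\Psi)\square\Phi',\ \Psi\square\Psi'),$$
and the second is obtained by replacing every $\square$ with $\lozenge$. Hence $(\Phi,\Psi)\in\mathcal{Z}_t^{(\ell)}((\mathcal A\times_T\mathcal B)'')$ is equivalent to requiring, for all $(\Phi',\Psi')$, both a second-coordinate identity $\Psi\square\Psi'=\Psi\lozenge\Psi'$ and a first-coordinate identity
$$\Phi\square\Phi'+\Phi\square T''(\Psi')+T''(\Psi)\square\Phi'=\Phi\lozenge\Phi'+\Phi\lozenge T''(\Psi')+T''(\Psi)\lozenge\Phi'.$$
The whole argument is then bookkeeping with these two identities together with the fact, recorded before this section, that $T''$ intertwines both Arens products.

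For part (i) I would read off the second identity directly as $\Psi\in\mathcal{Z}_t^{(\ell)}(\mathcal{B}'')$, and then specialize the first identity to $\Psi'=0$; by bilinearity of the Arens products this gives $(\Phi+T''(\Psi))\square\Phi'=(\Phi+T''(\Psi))\lozenge\Phi'$ for every $\Phi'\in\mathcal{A}''$, i.e. $\Phi+T''(\Psi)\in\mathcal{Z}_t^{(\ell)}(\mathcal{A}'')$, which is exactly the assertion. For part (ii) I would substitute $\Phi\mapsto\Phi-T''(\Psi)$ into the first identity; after cancellation it reduces to
$$\Phi\square\Phi'+\Phi\square T''(\Psi')-T''(\Psi)\square T''(\Psi')=\Phi\lozenge\Phi'+\Phi\lozenge T''(\Psi')-T''(\Psi)\lozenge T''(\Psi').$$
The first two pairs of terms agree because $\Phi\in\mathcal{Z}_t^{(\ell)}(\mathcal{A}'')$ and $T''(\Psi')\in\mathcal{A}''$, while the last pair agrees because $T''$ is a homomorphism for both products, so $T''(\Psi)\square T''(\Psi')=T''(\Psi\square\Psi')=T''(\Psi\lozenge\Psi')=T''(\Psi)\lozenge T''(\Psi')$, the middle equality coming from $\Psi\in\mathcal{Z}_t^{(\ell)}(\mathcal{B}'')$; the second-coordinate identity holds since $\Psi\in\mathcal{Z}_t^{(\ell)}(\mathcal{B}'')$.

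For part (iii) the extra input is the surjectivity of $T''$, which follows from $T$ being an epimorphism. The key lemma is that $T''$ then maps $\mathcal{Z}_t^{(\ell)}(\mathcal{B}'')$ into $\mathcal{Z}_t^{(\ell)}(\mathcal{A}'')$: given such a $\Psi$ and arbitrary $\zeta\in\mathcal{A}''$, write $\zeta=T''(\eta)$ and compute $T''(\Psi)\square\zeta=T''(\Psi\square\eta)=T''(\Psi\lozenge\eta)=T''(\Psi)\lozenge\zeta$, again using that $T''$ is a bi-homomorphism. Granting this, the affine shift $S(\Phi,\Psi)=(\Phi+T''(\Psi),\Psi)$ carries $\mathcal{Z}_t^{(\ell)}(\mathcal{A}'')\times_{T''}\mathcal{Z}_t^{(\ell)}(\mathcal{B}'')$ bijectively onto itself, so the inclusions furnished by (i) and (ii) collapse to the plain equality $\mathcal{Z}_t^{(\ell)}((\mathcal A\times_T\mathcal B)'')=\mathcal{Z}_t^{(\ell)}(\mathcal{A}'')\times_{T''}\mathcal{Z}_t^{(\ell)}(\mathcal{B}'')$. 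The Arens-regularity statement is then immediate: by the preceding proposition $(\mathcal A\times_T\mathcal B)''\cong\mathcal{A}''\times_{T''}\mathcal{B}''$, so $\mathcal A\times_T\mathcal B$ is regular iff its left topological center is everything iff $\mathcal{Z}_t^{(\ell)}(\mathcal{A}'')=\mathcal{A}''$ and $\mathcal{Z}_t^{(\ell)}(\mathcal{B}'')=\mathcal{B}''$, that is, iff both $\mathcal A$ and $\mathcal B$ are Arens regular.

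The step I expect to be the genuine obstacle, and the only place the epimorphism hypothesis is really used, is the key lemma $T''(\mathcal{Z}_t^{(\ell)}(\mathcal{B}''))\subseteq\mathcal{Z}_t^{(\ell)}(\mathcal{A}'')$ in part (iii). Without surjectivity one cannot represent an arbitrary $\zeta\in\mathcal{A}''$ as $T''(\eta)$, so the two topological centers are related only through the shift $S$ rather than identified outright. Everything else is routine bilinear bookkeeping built on the isomorphism $\Theta$ and on $T''$ intertwining both Arens products.
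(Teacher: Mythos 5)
Your proposal is correct and follows essentially the same route as the paper: the identification $(\mathcal A\times_T\mathcal B)''\cong\mathcal A''\times_{T''}\mathcal B''$, the specialization $\Psi'=0$ in part (i), the substitution argument for part (ii) (which the paper merely declares ``analogous''), and, crucially, the same key computation $T''(\Psi)\square\Phi'=T''(\Psi\square\Psi_0)=T''(\Psi\lozenge\Psi_0)=T''(\Psi)\lozenge\Phi'$ via surjectivity of $T''$, which is exactly the paper's equation (3.4). Your packaging of part (iii) through the affine shift $S(\Phi,\Psi)=(\Phi+T''(\Psi),\Psi)$ is a tidy reorganization of the paper's two direct inclusion arguments, but it rests on the same lemma $T''\bigl(\mathcal{Z}_t^{(\ell)}(\mathcal B'')\bigr)\subseteq\mathcal{Z}_t^{(\ell)}(\mathcal A'')$ and is not a genuinely different proof.
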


\begin{proof}
(i). Let $(\Phi,\Psi)\in \mathcal{Z}_t^{(\ell)}((\mathcal
A\times_T\mathcal B)'')$. Thus for each $(\Phi',\Psi')\in
(\mathcal A\times_T\mathcal B)''$ we have
$$
(\Phi,\Psi)\square(\Phi',\Psi')=(\Phi,\Psi)\lozenge(\Phi',\Psi').
$$
It follows that
\begin{equation}\label{e1}
\Phi\square\Phi'+\Phi\square\;T''(\Psi')+T''(\Psi)\square\Phi'
=\Phi\lozenge\Phi'+\Phi\lozenge\;T''(\Psi')+T''(\Psi)\lozenge\Phi'
\end{equation}
and
\begin{equation}\label{e8}
\Psi\square\Psi'=\Psi\lozenge\Psi'.
\end{equation}
The equality \eqref{e8} implies that
$\Psi\in\mathcal{Z}_t^{(\ell)}({\mathcal B}'')$. Moreover by
choosing $\Psi'=0$ in \eqref{e1} we obtain
\begin{equation}\label{e2}
(\Phi+T''(\Psi))\square\Phi'=(\Phi+T''(\Psi))\lozenge\Phi',
\end{equation}
for all $\Phi'\in {\mathcal A}''$, which implies that
$\Phi+T''(\Psi)\in\mathcal{Z}_t^{(\ell)}({\mathcal A}'')$.
Consequently
$$(\Phi+T''(\Psi),\Psi)\in\mathcal{Z}_t^{(\ell)}({\mathcal
A}'')\times_{T''}\mathcal{Z}_t^{(\ell)}({\mathcal B}'').$$

(ii). It is proved analogously to part (i).

(iii). Now suppose that $T$ is epimorphism and take
$(\Phi,\Psi)\in \mathcal{Z}_t^{(\ell)}((\mathcal A\times_T\mathcal
B)'')$. By part (i), $\Psi\in \mathcal{Z}_t^{(\ell)}({\mathcal
B}'')$. We show that $\Phi\in \mathcal{Z}_t^{(\ell)}({\mathcal
A}'')$. As we mentioned before, $T'':{\mathcal B}''\rightarrow
{\mathcal A}''$ is epimorphism, as well. Thus for each $\Phi'\in
{\mathcal A}''$, there is $\Psi_0\in {\mathcal B}''$ such that
$T''(\Psi_0)=\Phi'$. It follows that
\begin{equation}\label{e3}
T''(\Psi)\square\Phi'=T''(\Psi)\square\;T''(\Psi_0)
=T''(\Psi\lozenge\Psi_0)=T''(\Psi)\lozenge\Phi'.
\end{equation}
This equality together with \eqref{e2} yield that
$$
\Phi\square\Phi'=\Phi\lozenge\Phi'.
$$
It follows that $\Phi\in\mathcal{Z}_t^{(\ell)}({\mathcal A}'')$.
Therefore
$$
\mathcal{Z}_t^{(\ell)}((\mathcal A\times_T\mathcal
B)'')\subseteq\mathcal{Z}_t^{(\ell)}({\mathcal
A}'')\times_{T''}\mathcal{Z}_t^{(\ell)}({\mathcal B}'').
$$
The reverse of the above inclusion is also established. Indeed,
suppose that $(\Phi,\Psi)\in\mathcal{Z}_t^{(\ell)}({\mathcal
A}'')\times_{T''}\mathcal{Z}_t^{(\ell)}({\mathcal B}'')$. By
\eqref{e3}, for all $(\Phi',\Psi')\in (\mathcal A\times_T\mathcal
B)''$ we easily obtain
$$
(\Phi,\Psi)\square(\Phi',\Psi')=(\Phi,\Psi)\lozenge(\Phi',\Psi'),
$$
that is, $(\Phi,\Psi)\in \mathcal{Z}_t^{(\ell)}((\mathcal
A\times_T\mathcal B)'')$. Thus
$$
\mathcal{Z}_t^{(\ell)}({\mathcal
A}'')\times_{T''}\mathcal{Z}_t^{(\ell)}({\mathcal
B}'')\subseteq\mathcal{Z}_t^{(\ell)}((\mathcal A\times_T\mathcal
B)'')
$$
and therefore
$$
\mathcal{Z}_t^{(\ell)}((\mathcal A\times_T\mathcal
B)'')=\mathcal{Z}_t^{(\ell)}({\mathcal
A}'')\times_{T''}\mathcal{Z}_t^{(\ell)}({\mathcal B}''),
$$
as claimed. It follows that $\mathcal{A}\times_T\mathcal{B}$ is
Arens regular if and only if both $\mathcal{A}$ and $\mathcal{B}$
are Arens regular.
\end{proof}

\section{\bf Weak amenability and character amenability}

Let $\mathcal A$ be a Banach algebra, and let $X$ be a Banach
$\mathcal A-$bimodule. A linear map $D: \mathcal A\rightarrow X$
is called a derivation if $D(ab)=D(a)\cdot b+a\cdot D(b)$, for all
$a, b\in\mathcal A$. Given $x\in X$, let $ad_x: \mathcal
A\rightarrow X$ be given by $ad_x(a)=a\cdot x-x\cdot a$
$(a\in\mathcal A)$. Then $ad_x$ is a derivation which is called an
inner derivation at $x$. The Banach algebra $\mathcal A$ is called
weakly amenable if and only if every continuous derivation $D:
\mathcal A\rightarrow {\mathcal A}'$ is inner. If $I$ is a closed
ideal of $\mathcal A$, then, by \cite[Proposition 2.8.66]{D},
$\mathcal A$ is weakly amenable if $I$ and ${\mathcal A}/I$ are
weakly amenable. As the first result of this section, we prove
\cite[Theorem 4.1, part (2)]{BD} for the case where $\mathcal A$
is not necessarily commutative.

\begin{theorem}
Let $\mathcal A$ and $\mathcal B$ be Banach algebras and $T\in
hom(\mathcal B,\mathcal A)$. Then $\mathcal A\times_T\mathcal B$
is weakly amenable if and only if $\mathcal A$ and $\mathcal B$
are weakly amenable.
\end{theorem}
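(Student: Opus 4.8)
The statement is a biconditional whose two implications are of quite different character, so I would handle them separately.

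For the implication ``$\mathcal A$ and $\mathcal B$ weakly amenable $\Rightarrow$ $\mathcal A\times_T\mathcal B$ weakly amenable'', I would invoke directly the hereditary property recorded above from \cite[Proposition 2.8.66]{D}. As noted in the introduction, $\mathcal A$ is a closed ideal of $\mathcal C:=\mathcal A\times_T\mathcal B$ and the quotient $(\mathcal A\times_T\mathcal B)/\mathcal A$ is isometrically isomorphic to $\mathcal B$. Thus both the ideal $\mathcal A$ and the quotient $\mathcal B$ are weakly amenable, and the cited proposition gives that $\mathcal A\times_T\mathcal B$ is weakly amenable. This direction is essentially immediate.

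For the converse the crucial observation is that each of $\mathcal A$ and $\mathcal B$ is a \emph{homomorphic retract} of $\mathcal C$. For $\mathcal A$, set $\iota_{\mathcal A}(a)=(a,0)$ and $\rho_{\mathcal A}(a,b)=a+T(b)$; since $T$ is multiplicative one checks from \eqref{e7} that both are contractive algebra homomorphisms with $\rho_{\mathcal A}\circ\iota_{\mathcal A}=\mathrm{id}_{\mathcal A}$. For $\mathcal B$, set $\iota_{\mathcal B}(b)=(0,b)$ and $\rho_{\mathcal B}(a,b)=b$; these are again contractive homomorphisms with $\rho_{\mathcal B}\circ\iota_{\mathcal B}=\mathrm{id}_{\mathcal B}$. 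The plan is then to establish the general principle that weak amenability passes to homomorphic retracts, and to apply it twice. Given contractive homomorphisms $\iota:\mathcal D\to\mathcal C$ and $\rho:\mathcal C\to\mathcal D$ with $\rho\circ\iota=\mathrm{id}_{\mathcal D}$, where $\mathcal C$ is weakly amenable, and a continuous derivation $D:\mathcal D\to\mathcal D'$, I would form $\hat D:=\rho'\circ D\circ\rho:\mathcal C\to\mathcal C'$. Viewing $\mathcal D'$ as a $\mathcal C$-bimodule through $\rho$, the adjoint $\rho':\mathcal D'\to\mathcal C'$ is a $\mathcal C$-bimodule map into $\mathcal C'$ with its natural actions, and the derivation identity for $D$ transfers to show that $\hat D$ is a continuous derivation into $\mathcal C'$. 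Weak amenability of $\mathcal C$ then yields $\Lambda\in\mathcal C'$ with $\hat D=\mathrm{ad}_\Lambda$. Applying the adjoint $\iota':\mathcal C'\to\mathcal D'$ (likewise a bimodule map for the $\iota$-induced actions) and using $\iota'\circ\rho'=\mathrm{id}_{\mathcal D'}$, I recover $D=\mathrm{ad}_{\iota'(\Lambda)}$, so $D$ is inner and $\mathcal D$ is weakly amenable. Taking $\mathcal D=\mathcal A$ and $\mathcal D=\mathcal B$ completes the converse.

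The main obstacle is conceptual rather than computational: a closed ideal of a weakly amenable Banach algebra need not itself be weakly amenable, so one cannot prove that $\mathcal A$ is weakly amenable by merely restricting derivations from $\mathcal C$. The retraction $\rho_{\mathcal A}(a,b)=a+T(b)$ — available precisely because $T$ is an algebra homomorphism — is what repairs this, exhibiting $\mathcal A$ as a genuine retract of $\mathcal C$. The one place demanding care is the bookkeeping of module actions: I must verify that $\rho'$ and $\iota'$ intertwine the bimodule structures induced via $\rho$ (respectively $\iota$) with the natural actions on $\mathcal C'$ and $\mathcal D'$, so that $\hat D$ genuinely lands in the natural dual module $\mathcal C'$ and the definition of weak amenability of $\mathcal C$ applies verbatim.
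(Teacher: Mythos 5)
Your proof is correct and follows essentially the same route as the paper: the forward direction is the identical application of \cite[Proposition 2.8.66]{D} to the closed ideal $\mathcal A$ with quotient $\mathcal B$, and in the converse your maps $\rho_{\mathcal A}(a,b)=a+T(b)$ and $\rho_{\mathcal B}(a,b)=b$ and the composed derivations $\rho'\circ d\circ\rho$ are exactly the paper's $P_1$, $P_2$ and $D_i=P_i'\circ d_i\circ P_i$. The only difference is presentational: where the paper defers the recovery of $d_1=ad_{\varphi_1}$ and $d_2=ad_{\psi_2}$ to ``similar arguments'' from \cite{BD}, you make that step explicit and self-contained via the sections $\iota_{\mathcal A}$, $\iota_{\mathcal B}$ and the identity $\iota'\circ\rho'=\mathrm{id}$, which yields precisely the same inner implementing functionals (indeed $\iota_{\mathcal A}'(\varphi_1,\psi_1)=\varphi_1$ and $\iota_{\mathcal B}'(\varphi_2,\psi_2)=\psi_2$).
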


\begin{proof}
We only explain the proof of \cite[Theorem 4.1,Part (2)]{BD}, with
respect to definition \eqref{e7}, and for the case where $\mathcal
A$ is not necessarily commutative. Assume that both $\mathcal A$
and $\mathcal B$ are weakly amenable. Since $\mathcal A$ is a
closed ideal in $\mathcal A\times_T\mathcal B$ and $(\mathcal
A\times_T\mathcal B)/{\mathcal A}\cong\mathcal B$, thus $\mathcal
A\times_T\mathcal B$ is weakly amenable, by \cite[Proposition
2.8.66]{D}. Conversely, let $\mathcal A\times_T\mathcal B$ be
weakly amenable and $d_1:\mathcal A\rightarrow {\mathcal A}'$ and
$d_2:\mathcal B\rightarrow {\mathcal B}'$ be continuous
derivations. Moreover suppose that $P_1:\mathcal A\times_T\mathcal
B\rightarrow\mathcal A$ and $P_2:\mathcal A\times_T\mathcal
B\rightarrow\mathcal B$ are defined as $P_1(a,b)=a+T(b)$ and
$P_2(a,b)=b$. Also let $D_1=P_1'\circ d_1\circ P_1$ and
$D_2=P_2'\circ d_2\circ P_2$, be as in the proof of \cite[Theorem
4.1,Part (2)]{BD}. Then $D_1,D_2:\mathcal A\times_T\mathcal
B\rightarrow {\mathcal A}'\times {\mathcal B}'$ are continuous
derivations and since $\mathcal A\times_T\mathcal B$ is weakly
amenable, there exist $(\varphi_1,\psi_1)$ and
$(\varphi_2,\psi_2)$ in ${\mathcal A}'\times_T {\mathcal B}'$ such
that $D_1=ad_{(\varphi_1,\psi_1)}$ and
$D_2=ad_{(\varphi_2,\psi_2)}$. Similar arguments to the proof of
\cite[Theorem 4.1,Part (2)]{BD} imply that $d_1=ad_{\varphi_1}$
and $d_2=ad_{\psi_2}$. It follows that $\mathcal A$ and $\mathcal
B$ are weakly amenable.
\end{proof}

Let $\sigma(\mathcal A)$ be the character space of $\mathcal A$,
the space consisting of all non-zero continuous multiplicative
linear functionals on $\mathcal A$. In \cite[Theorem 2.1]{BD},
$\sigma(\mathcal A\times_T\mathcal B)$ has been characterized, for
the case where $\mathcal A$ is commutative. The arguments, used in
the proof of \cite[Theorem 2.1]{BD} will be worked for the case
where we use the definition \eqref{e7} for $\times_T$, and also
$\mathcal A$ is not commutative. In fact
$$
\sigma(\mathcal A\times_T\mathcal B)=\{(\varphi,\varphi\circ
T,):\;\varphi\in\sigma(\mathcal
A)\}\cup\{(0,\psi):\;\psi\in\sigma(\mathcal B)\},
$$
as a disjoint union.

Following \cite{SM}, a Banach algebra $\mathcal A$ is called left
character amenable if for all $\psi\in\sigma(\mathcal A)\cup\{0\}$
and all Banach $\mathcal A-$bimodules $E$ for which the right
module action is given by $x\cdot a=\psi(a)x$ $(a\in\mathcal A,
x\in E)$, every continuous derivation $d: \mathcal A\rightarrow E$
is inner. Right character amenability is defined similarly by
considering Banach $\mathcal A-$bimodules $E$ for which the left
module action is given by $a\cdot x=\psi(a)x$ $(a\in\mathcal A,
x\in E)$. In this section we study left character amenability of
$\mathcal{A}\times_T\mathcal{B}$. Before, we investigate
\cite[Proposition 2.8]{SM} for $\mathcal{A}\times_T\mathcal{B}$,
which is useful for our purpose. Recall from \cite{SM} that, for
$\varphi\in\sigma(\mathcal A)\cup\{0\}$ and $\Phi\in {\mathcal
A}''$, $\Phi$ is called $\varphi-$topologically left invariant
($\varphi-$TLI) if
$$
\langle\Phi,a\cdot
f\rangle=\varphi(a)\langle\Phi,f\rangle\;\;\;\;\;\; (a\in\mathcal
A,f\in {\mathcal A}'),
$$
or equivalently $\Phi\square a=\varphi(a)\Phi$. Also $\Phi$ is
called $\varphi-$topologically right invariant ($\varphi-$TRI) if
$$
\langle\Phi,f\cdot
a\rangle=\varphi(a)\langle\Phi,f\rangle\;\;\;\;\;\; (a\in\mathcal
A,f\in {\mathcal A}'),
$$
or equivalently $a\square\Phi=\varphi(a)\Phi$.
\begin{theorem}\label{t1}
Let $\mathcal A$ and $\mathcal B$ be Banach algebras, $T\in
hom(\mathcal B,\mathcal A)$ and
$(\Phi,\Psi)\in(\mathcal{A}\times_T\mathcal{B})''=\mathcal{A}''\times_{T''}\mathcal{B}''$.
\begin{itemize}
\item[(i)] For $\varphi\in\sigma(\mathcal A)$, $(\Phi,\Psi)$ is
$(\varphi,\varphi\circ T)$-TLI with $\langle (\Phi,\Psi) ,
(\varphi,\varphi\circ T)\rangle\neq 0$ if and only if $\Psi=0$ and
$\Phi$ is $\varphi$-TLI with $\Phi(\varphi)\neq 0$. \item[(ii)]
For $\psi\in\sigma(\mathcal B)$, $(\Phi,\Psi)$ is $(0,\psi)$-TLI
with $\langle (\Phi,\Psi) , (0,\psi)\rangle\neq 0$ if and only if
$\Psi$ is $\psi$-TLI with $\Psi(\psi)\neq 0$ and
$\Phi=-T''(\Psi)$.
\end{itemize}
Similar results hold for topologically right invariant elements.
\end{theorem}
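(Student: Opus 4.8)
The plan is to unwind the definition of topological left invariance directly on the product, using the identification $(\mathcal{A}\times_T\mathcal{B})''=\mathcal{A}''\times_{T''}\mathcal{B}''$ from the Proposition together with the first-Arens product formula
$$(\Phi_1,\Psi_1)(\Phi_2,\Psi_2)=(\Phi_1\square\Phi_2+\Phi_1\square T''(\Psi_2)+T''(\Psi_1)\square\Phi_2,\ \Psi_1\square\Psi_2)$$
and the identity $T''(b)=T(b)$ for $b\in\mathcal{B}$. For a character $\Xi$ on $\mathcal{A}\times_T\mathcal{B}$, the statement that $(\Phi,\Psi)$ is $\Xi$-TLI means $(\Phi,\Psi)\square(a,b)=\Xi(a,b)\,(\Phi,\Psi)$ for all $(a,b)$. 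Taking $(\Phi_2,\Psi_2)=(a,b)$ above, this single identity in $\mathcal{A}''\times_{T''}\mathcal{B}''$ splits into one equation in the $\mathcal{A}''$-coordinate and one in the $\mathcal{B}''$-coordinate; the whole proof is then a matter of reading off these two coordinate equations in the two cases $\Xi=(\varphi,\varphi\circ T)$ and $\Xi=(0,\psi)$, where $\Xi(a,b)$ equals $\varphi(a+T(b))$ and $\psi(b)$ respectively.

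For part (i) I first exploit the $\mathcal{B}''$-coordinate equation $\Psi\square b=\varphi(a+T(b))\Psi$. Specializing to $b=0$ gives $0=\varphi(a)\Psi$ for all $a\in\mathcal{A}$; since $\varphi\in\sigma(\mathcal{A})$ is nonzero I may choose $a$ with $\varphi(a)\neq0$, forcing $\Psi=0$. With $\Psi=0$ the $\mathcal{A}''$-coordinate equation becomes $\Phi\square a+\Phi\square T(b)=\varphi(a)\Phi+\varphi(T(b))\Phi$; putting $b=0$ shows it is equivalent to $\Phi\square a=\varphi(a)\Phi$, i.e. to $\Phi$ being $\varphi$-TLI (the $T(b)$-term is then automatic). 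Finally the pairing satisfies $\langle(\Phi,0),(\varphi,\varphi\circ T)\rangle=\Phi(\varphi)$, so the nonvanishing condition transfers verbatim, and the converse is the same computation read backwards.

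For part (ii) the $\mathcal{B}''$-coordinate equation is $\Psi\square b=\psi(b)\Psi$, which is exactly the assertion that $\Psi$ is $\psi$-TLI. The $\mathcal{A}''$-coordinate equation is $\Phi\square a+\Phi\square T(b)+T''(\Psi)\square a=\psi(b)\Phi$ for all $a,b$; setting $b=0$ yields (A) $(\Phi+T''(\Psi))\square a=0$ and setting $a=0$ yields (B) $\Phi\square T(b)=\psi(b)\Phi$, and conversely (A) and (B) recover the full equation. The delicate point, and what I expect to be the main obstacle, is that (A) alone only says $\Phi+T''(\Psi)$ annihilates $\mathcal{A}$ on the right and does \emph{not} by itself give $\Phi=-T''(\Psi)$. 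To upgrade it I use that $T''\colon(\mathcal{B}'',\square)\to(\mathcal{A}'',\square)$ is a homomorphism with $T''(b)=T(b)$: combined with the $\psi$-TLI property of $\Psi$ this gives $T''(\Psi)\square T(b)=T''(\Psi\square b)=\psi(b)T''(\Psi)$. Applying (A) with $a=T(b)$ gives $\Phi\square T(b)=-T''(\Psi)\square T(b)=-\psi(b)T''(\Psi)$, and comparing with (B) yields $\psi(b)\Phi=-\psi(b)T''(\Psi)$; choosing $b$ with $\psi(b)=1$ gives $\Phi=-T''(\Psi)$. The nonvanishing condition is immediate since $\langle(\Phi,\Psi),(0,\psi)\rangle=\Psi(\psi)$, and the converse again follows by substituting $\Phi=-T''(\Psi)$ back into (A), (B) and the $\psi$-TLI relation.

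The topologically right invariant case is handled identically after replacing $\square$ by $\lozenge$, the left module actions by the right ones, and using that $T''\colon(\mathcal{B}'',\lozenge)\to(\mathcal{A}'',\lozenge)$ is likewise a homomorphism; no new ideas are required.
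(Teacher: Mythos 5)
Your proof is correct and follows essentially the same route as the paper's: the identical coordinate-splitting of the TLI identity via the product formula on $\mathcal{A}''\times_{T''}\mathcal{B}''$, the same specializations $b=0$ (forcing $\Psi=0$ in part (i), giving $(\Phi+T''(\Psi))\square a=0$ in part (ii)) and $a=0$, and the exact same key step of combining $T''(\Psi)\square T(b)=T''(\Psi\square b)=\psi(b)T''(\Psi)$ with the choice $a=T(b)$ to conclude $\Phi=-T''(\Psi)$. No gaps; the converses and the right-invariant case are handled just as in the paper.
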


\begin{proof}
(i) Let $(\Phi,\Psi)$ be $(\varphi,\varphi\circ T)$-TLI with
$\langle (\Phi,\Psi) , (\varphi,\varphi\circ T)\rangle\neq 0$.
Thus for all $a\in\mathcal{A}$ and $b\in\mathcal{B}$ we have
$$
(\Phi,\Psi)\square (a,b)=(\varphi(a)+(\varphi\circ
T)(b))(\Phi,\Psi)
$$
and
\begin{eqnarray}\label{e6}
\langle (\Phi,\Psi) , (\varphi,\varphi\circ
T)\rangle=\Phi(\varphi)+\Psi(\varphi\circ T)\neq 0.
\end{eqnarray}
Consequently
$$(\Phi,\Psi)\square (a,b)=(\Phi\square a+ \Phi\square T(b)+T''(\Psi)\square a , \Psi\square b)
=(\varphi(a)+(\varphi\circ T)(b))(\Phi,\Psi).$$ It follows that
\begin{equation}\label{e4}
\Phi\square a+ \Phi\square T(b)+T''(\Psi)\square a=
\varphi(a)\Phi+(\varphi\circ T)(b)\Phi
\end{equation}
and
\begin{equation}\label{e5}
\Psi\square b=\varphi(a)\Psi+(\varphi\circ T)(b)\Psi.
\end{equation}
Choosing $b=0$ and $a\in\mathcal A$ with $\varphi(a)\neq 0$, we
conclude from \eqref{e5} that $\varphi(a)\Psi=0$, which implies
$\Psi=0$. Also by \eqref{e4} we obtain $\Phi\square
a=\varphi(a)\Phi$. On the other hand since $\Psi=0$, by \eqref{e6}
we get $\Phi(\varphi)\neq 0$. Consequently $\Phi$ is $\varphi$-TLI
with $\Phi(\varphi)\neq 0$.

For the converse, suppose that $\Phi$ is $\varphi$-TLI with
$\Phi(\varphi)\neq 0$. Thus
$$\langle (\Phi,0) , (\varphi,\varphi\circ T)\rangle=\Phi(\varphi)\neq 0.$$
Also for all $a\in\mathcal A$ and $b\in\mathcal B$,
$$(\Phi,0)\square(a,b)=(\Phi\square a+\Phi\square T(b),0).$$ On the
other hand
$$(\varphi(a)+(\varphi\circ
T)(b))(\Phi,0)=(\varphi(a)\Phi+(\varphi\circ T)(b)\Phi,0).$$ By
the hypothesis, $\Phi\square a=\varphi(a)\Phi$ and $\Phi\square
T(b)=(\varphi\circ T)(b)\Phi$. Thus
$$(\Phi,0)\square(a,b)=(\varphi(a)+(\varphi\circ T)(b))(\Phi,0).$$
Consequently $(\Phi,0)$ is $(\varphi,\varphi\circ T)$-TLI with
$\langle (\Phi,0) , (\varphi,\varphi\circ
T)\rangle=\Phi(\varphi)\neq 0$.\\

(ii) Let $(\Phi,\Psi)$ be $(0,\psi)$-TLI with $\langle (\Phi,\Psi)
, (0,\psi)\rangle\neq 0$. It follows that $\Psi(\psi)\neq 0$.
Moreover, for all $a\in\mathcal A$ and $b\in\mathcal B$ we have
\begin{eqnarray*}
(\Phi\square a+ \Phi\square T(b)+T''(\Psi)\square a , \Psi\square
b)&=&(\Phi,\Psi)\square(a,b)\\
&=&(\psi(b))(\Phi,\Psi)\\
&=&(\psi(b)\Phi , \psi(b)\Psi).
\end{eqnarray*}
So $\Psi\square b=\psi(b)\Psi$ and consequently $\Psi$ is
$\psi$-TLI with $\Psi(\psi)\neq 0$. Furthermore
\begin{equation}\label{e13}
\Phi\square a+ \Phi\square T(b)+T''(\Psi)\square a=\psi(b)\Phi.
\end{equation}
Choosing $a=0$ in \eqref{e13}, we obtain
\begin{equation}\label{e11}
\Phi\square
T(b)=\psi(b)\Phi\;\;\;\;\;\;\;\;\;\;\;\;\;\;\;\;(b\in\mathcal B)
\end{equation}
and by choosing $b=0$ in \eqref{e13} we get
\begin{equation}\label{e9}
\Phi\square a+T''(\Psi)\square
a=0\;\;\;\;\;\;\;\;\;\;\;(a\in\mathcal A).
\end{equation}
On the other hand for all $b\in\mathcal B$
\begin{equation}\label{e10}
T''(\Psi)\square T(b)=T''(\Psi\square
b)=T''(\psi(b)\Psi)=\psi(b)T''(\Psi).
\end{equation}
Suppose that $b\in B$ with $\psi(b)\neq 0$. Using \eqref{e9} for
$a=T(b)$, and also \eqref{e11} and \eqref{e10} we obtain
$$0=\Phi\square T(b)+T''(\Psi)\square T(b)=\psi(b)\Phi+\psi(b)T''(\Psi),$$
which implies $\Phi+T''(\Psi)=0$ and so $\Phi=-T''(\Psi)$.

For the converse, suppose that $\Psi$ is $\psi$-TLI with
$\Psi(\psi)\neq 0$. It follows that
$$\langle (-T''(\Psi),\Psi),(0,\psi)\rangle=\Psi(\psi)\neq 0.$$
We show that $(-T''(\Psi),\Psi)$ is
$(0,\psi)$-TLI. For all $a\in\mathcal A$ and $b\in\mathcal B$ we
have
$$(-T''(\Psi),\Psi)\square(a,b)=(-T''(\Psi)\square T(b) , \Psi\square b).$$
On the other hand
$$\psi(b)(-T''(\psi),\Psi)=(-\psi(b)T''(\Psi),\psi(b)\Psi).$$
Since $\Psi$ is $\psi$-TLI, thus $\Psi\square b=\psi(b)\Psi$ and
consequently $T''(\Psi)\square T(b)=\psi(b)T''(\Psi)$. It follows
that
$$
(-T''(\Psi),\Psi)\square(a,b)=\psi(b)(-T''(\Psi),\Psi),
$$
which implies that $(-T''(\Psi),\Psi)$ is $(0,\psi)$-TLI.
\end{proof}

In the sequel, we prove that $\mathcal{A}\times_T\mathcal{B}$ is
left (right) character amenable if and only if both $\mathcal{A}$
and $\mathcal{B}$ so are. In fact we prove the result provided for
the $\theta-$Lau product given in \cite[Corollary 2.9]{SM}, for
the morphism product $\mathcal{A}\times_T\mathcal{B}$. Before, we
recall some earlier results related to character amenability,
which are useful for our purpose.

Let $I$ be a closed two-sided ideal in the Banach algebra
${\mathcal A}$. By \cite[Theorem 2.6]{SM}, if both $I$ and
${\mathcal A}/I$ are left character amenable, then ${\mathcal A}$
is also left character amenable. Also by \cite[Theorem 2.3]{SM},
$\mathcal A$ is left character amenable if and only if the
following two conditions hold:
\begin{itemize}
\item[(i)] $\mathcal A$ has a bounded left approximate identity,
\item[(ii)] for every $\psi\in\sigma({\mathcal A})$, there exists
a $\psi$-TLI element $\Psi\in{\mathcal A}''$ such that
$\Psi(\psi)\neq 0$.
\end{itemize}
Similar statements hold for right character amenability.
\begin{theorem}
Let $\mathcal{A}$ and $\mathcal{B}$ be Banach algebras and $T\in
hom(\mathcal B,\mathcal A)$. Then $\mathcal{A}\times_T\mathcal{B}$
is left (right) character amenable if and only if both
$\mathcal{A}$ and $\mathcal{B}$ are left (right) character
amenable.
\end{theorem}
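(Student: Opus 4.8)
The plan is to prove the left statement in full; the right statement then follows by the symmetric argument, using topologically right invariant elements (Theorem \ref{t1}, right version) and bounded right approximate identities in place of their left counterparts. Throughout, the engine is the characterisation of \cite[Theorem 2.3]{SM}: a Banach algebra is left character amenable exactly when it has a bounded left approximate identity and, for each of its characters $\chi$, admits a $\chi$-TLI element whose pairing with $\chi$ is non-zero.

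For the ``if'' direction I would argue directly from the quotient structure. Since $\mathcal A$ is a closed two-sided ideal of $\mathcal A\times_T\mathcal B$ with $(\mathcal A\times_T\mathcal B)/\mathcal A\cong\mathcal B$, and both $\mathcal A$ and $\mathcal B$ are assumed left character amenable, the conclusion is immediate from \cite[Theorem 2.6]{SM}. No further computation is needed here.

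For the converse, assume $\mathcal A\times_T\mathcal B$ is left character amenable and verify the two conditions of \cite[Theorem 2.3]{SM} for each factor. The crucial observation is that both $P_2(a,b)=b$ and $P_1(a,b)=a+T(b)$ are surjective continuous algebra homomorphisms onto $\mathcal B$ and $\mathcal A$ respectively, where multiplicativity of $P_1$ rests on $T$ being a homomorphism. Since a surjective continuous homomorphism sends a bounded left approximate identity to one, applying $P_1$ and $P_2$ to a bounded left approximate identity $(e_\alpha,f_\alpha)$ of $\mathcal A\times_T\mathcal B$ produces $(e_\alpha+T(f_\alpha))$ as such an identity for $\mathcal A$ and $(f_\alpha)$ for $\mathcal B$, settling condition (i) for both. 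For condition (ii), recall $\sigma(\mathcal A\times_T\mathcal B)=\{(\varphi,\varphi\circ T):\varphi\in\sigma(\mathcal A)\}\cup\{(0,\psi):\psi\in\sigma(\mathcal B)\}$. Fixing $\varphi\in\sigma(\mathcal A)$, left character amenability of the product furnishes a $(\varphi,\varphi\circ T)$-TLI element $(\Phi,\Psi)$ with $\langle(\Phi,\Psi),(\varphi,\varphi\circ T)\rangle\neq0$, whereupon Theorem \ref{t1}(i) forces $\Phi$ to be $\varphi$-TLI with $\Phi(\varphi)\neq0$; fixing $\psi\in\sigma(\mathcal B)$, a $(0,\psi)$-TLI element with non-zero pairing exists and Theorem \ref{t1}(ii) yields a $\psi$-TLI element $\Psi\in\mathcal B''$ with $\Psi(\psi)\neq0$. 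Thus both conditions of \cite[Theorem 2.3]{SM} hold for $\mathcal A$ and for $\mathcal B$, so each is left character amenable.

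The substantive part is the converse, where the single hypothesis on the product must be split into the two independent data (approximate identity and invariant functional) for each factor; the surjective homomorphisms $P_1,P_2$ and Theorem \ref{t1} do exactly this bookkeeping. The only steps needing genuine care are confirming that $P_1$ is multiplicative and that the image of the approximate identity is again a bounded left approximate identity, while the invariant-element half is essentially a translation through Theorem \ref{t1} and the description of $\sigma(\mathcal A\times_T\mathcal B)$; neither is a serious obstacle, so the result should go through cleanly.
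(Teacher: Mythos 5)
Your proof is correct, and while it shares the paper's overall skeleton, the converse direction is carried out by a genuinely different bookkeeping. The ``if'' direction is identical: $\mathcal A$ is a closed ideal with $(\mathcal A\times_T\mathcal B)/\mathcal A\cong\mathcal B$, and \cite[Theorem 2.6]{SM} applies. In the converse, the paper obtains left character amenability of $\mathcal B$ in one stroke from the continuous epimorphism $\mu(a,b)=b$ together with \cite[Theorem 2.6]{SM}, and gets the bounded left approximate identity for $\mathcal A$ by citing \cite[Proposition 3.2]{AGR}; only Theorem \ref{t1}(i) is actually used, for the invariant element of $\mathcal A$. You instead verify both conditions of \cite[Theorem 2.3]{SM} for each factor directly: the bounded left approximate identities come from pushing one forward along the surjective continuous homomorphisms $P_2(a,b)=b$ and $P_1(a,b)=a+T(b)$ --- the multiplicativity of $P_1$ is exactly the identity $a_1a_2+a_1T(b_2)+T(b_1)a_2+T(b_1)T(b_2)=\bigl(a_1+T(b_1)\bigr)\bigl(a_2+T(b_2)\bigr)$, valid since $T$ is a homomorphism, and the image of a bounded left approximate identity under a surjective continuous homomorphism is again one --- while the invariant elements for $\mathcal B$ come from Theorem \ref{t1}(ii), which the paper proves but never invokes in this theorem. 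What each approach buys: the paper's route is shorter on the $\mathcal B$ side by outsourcing to the hereditary property of character amenability under epimorphisms, and on the $\mathcal A$ side by citing the earlier proposition; your route is self-contained (no appeal to \cite{AGR}), treats the two factors symmetrically through the single mechanism of \cite[Theorem 2.3]{SM} plus Theorem \ref{t1}, and the observation that $P_1$ is a homomorphism is a tidy device the paper does not exploit. Both arguments are sound.
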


\begin{proof}
We only prove the left version. Suppose that $\mathcal{A}$ and
$\mathcal{B}$ are left character amenable. Since $\mathcal{A}$ is
an ideal in $\mathcal{A}\times_T\mathcal{B}$ and
$(\mathcal{A}\times_T\mathcal{B})/\mathcal{A}\cong\mathcal{B}$,
thus $\mathcal{A}\times_T\mathcal{B}$ is also left character
amenable, by \cite[Theorem 2.6]{SM}. Conversely, suppose that
$\mathcal{A}\times_T\mathcal{B}$ is left character amenable. Then
the map
$$\mu : \mathcal{A}\times_T\mathcal{B}\to\mathcal{B};\ \ \ \ (a,b)\mapsto b,$$
is clearly a continuous epimorphism. Now \cite[Theorem 2.6 ]{SM}
implies that $\mathcal{B}$ is also left character amenable. Now we
show that $\mathcal{A}$ is left character amenable. Since
$\mathcal{A}\times_T\mathcal{B}$ is left character amenable, then
by \cite[Theorem 2.3]{SM}, $\mathcal{A}\times_T\mathcal{B}$ has a
bounded left approximate identity. So by \cite[Proposition
3.2]{AGR}, $\mathcal{A}$ has a bounded left approximate identity.
Moreover, by the hypothesis and also \cite[Theorem 2.3]{SM}, for
each $\varphi\in\sigma(\mathcal{A})$, there exists a
$(\varphi,\varphi\circ T)$-TLI element $(\Phi,\Psi)\in
\mathcal{A}''\times_{T''}\mathcal{B}''$ such that
$$\Phi(\varphi)+\Psi(\varphi\circ T)\neq 0.$$ By part (i) of
Theorem \ref{t1}, we have $\Psi=0$ and $\Phi$ is a $\varphi$-TLI
element with $\Phi(\varphi)\neq 0$. Therefore $\mathcal{A}$ is
left character amenable again by \cite[Theorem 2.3]{SM}.
\end{proof}

\section{\bf Character inner amenability}

Let $\mathcal A$ be a Banach algebra and
$\varphi\in\sigma(\mathcal A)$. Following \cite{EK}, $\mathcal A$
is $\varphi-$inner amenable if there exists $m\in {\mathcal A}''$
such that $m(\varphi)=1$ and $m\square a=a\square m$, for all
$a\in\mathcal A$. Such an $m$ is called a $\varphi-$inner mean for
${\mathcal A}''$. A Banach algebra $\mathcal A$ is called
character inner amenable if $\mathcal A$ is $\varphi-$inner
amenable, for all $\varphi\in\sigma(\mathcal A)$. The aim of the
present section is to prove that character inner amenability of
$\mathcal{A}\times_T\mathcal{B}$ is stable with respect to $T$. In
fact we generalize \cite[Theorem 4.2, part (3)]{BD} with respect
to the definition \eqref{e7}. We commence with following result.

\begin{theorem}\label{t3}
Let $\mathcal{A}$ and $\mathcal{B}$ be Banach algebras, $T\in
hom(\mathcal B,\mathcal A)$ and $\varphi\in\sigma(\mathcal A)$.
Then $\mathcal{A}$ is $\varphi$-inner amenable if and only if
$\mathcal{A}\times_T\mathcal{B}$ is $(\varphi,\varphi\circ
T)$-inner amenable.
\end{theorem}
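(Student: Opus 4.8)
The plan is to transfer everything to the identification $(\mathcal A\times_T\mathcal B)''\cong\mathcal A''\times_{T''}\mathcal B''$ proved earlier, and then read off the two defining conditions of a $(\varphi,\varphi\circ T)$-inner mean coordinatewise. An element $M=(\Phi,\Psi)\in\mathcal A''\times_{T''}\mathcal B''$ is such a mean exactly when $\langle M,(\varphi,\varphi\circ T)\rangle=\Phi(\varphi)+\Psi(\varphi\circ T)=1$ and $M\square(a,b)=(a,b)\square M$ for all $(a,b)\in\mathcal A\times_T\mathcal B$. Using the product formula for the first Arens product on $\mathcal A''\times_{T''}\mathcal B''$ together with $T''(b)=T(b)$ for $b\in\mathcal B$, I would split the commutation relation into its first coordinate
\[
\Phi\square a+\Phi\square T(b)+T''(\Psi)\square a=a\square\Phi+a\square T''(\Psi)+T(b)\square\Phi
\]
and its second coordinate $\Psi\square b=b\square\Psi$, valid for all $a\in\mathcal A$ and $b\in\mathcal B$.

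For the forward implication, suppose $\mathcal A$ is $\varphi$-inner amenable and let $m\in\mathcal A''$ satisfy $m(\varphi)=1$ and $m\square x=x\square m$ for all $x\in\mathcal A$. I would take $M=(m,0)$. The functional condition is immediate, the second-coordinate relation is trivially satisfied, and the first-coordinate relation collapses to $m\square a+m\square T(b)=a\square m+T(b)\square m$, which holds because $m$ commutes with every element of $\mathcal A$, in particular with $a$ and with $T(b)\in\mathcal A$.

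For the converse, suppose $\mathcal A\times_T\mathcal B$ is $(\varphi,\varphi\circ T)$-inner amenable and let $M=(\Phi,\Psi)$ be an inner mean. The key move is to set $b=0$ in the first-coordinate relation, which decouples it to $(\Phi+T''(\Psi))\square a=a\square(\Phi+T''(\Psi))$ for every $a\in\mathcal A$. Thus $m:=\Phi+T''(\Psi)\in\mathcal A''$ commutes with all of $\mathcal A$, and using the adjoint relation $\langle T''(\Psi),\varphi\rangle=\langle\Psi,\varphi\circ T\rangle$ one gets $m(\varphi)=\Phi(\varphi)+\Psi(\varphi\circ T)=1$. Hence $m$ is a $\varphi$-inner mean for $\mathcal A''$, so $\mathcal A$ is $\varphi$-inner amenable.

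The verifications are routine once the product formula is available; the only real content is the two bookkeeping choices $M=(m,0)$ and $m=\Phi+T''(\Psi)$, together with the observation that $b=0$ already isolates the first coordinate in the converse. The step most likely to cause slips is keeping track of $T''$ on embedded elements, i.e. the identities $T''(b)=T(b)$ and $\langle T''(\Psi),\varphi\rangle=\langle\Psi,\varphi\circ T\rangle$; these are exactly where the homomorphism $T$ enters and must be handled carefully.
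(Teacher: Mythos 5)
Your proposal is correct and follows essentially the same route as the paper: in the forward direction both take the candidate mean $(m,0)$ and use that $m$ commutes with $a$ and with $T(b)\in\mathcal A$, and in the converse both set $b=0$ in the first-coordinate commutation relation to obtain $(\Phi+T''(\Psi))\square a=a\square(\Phi+T''(\Psi))$ and verify $\langle \Phi+T''(\Psi),\varphi\rangle=\Phi(\varphi)+\Psi(\varphi\circ T)=1$ via $\langle T''(\Psi),\varphi\rangle=\langle\Psi,\varphi\circ T\rangle$. No gaps; this is the paper's proof in all but notation.
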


\begin{proof}
First let $\mathcal{A}$ be $\varphi-$inner amenable. So there
exists $m\in\mathcal{A}''$ such that $m(\varphi)=1$ and
$$a\s m=m\s a,$$ for each $a\in\mathcal{A}$. It follows that
$$\langle (m,0),(\varphi,\varphi\circ T)\rangle=m(\varphi)=1.$$
Moreover for each $(a,b)\in \mathcal{A}\times_T\mathcal{B}$,
$$
(a,b)\s (m,0)=(a\s m+T(b)\s m,0)
$$
and
$$
(m,0)\s (a,b)=(m\s a+m\s T(b),0).
$$
Since $a\s m=m\s a$ and $T(b)\s m=m\s T(b)$, it follows that
$$(a,b)\s (m,0)=(m,0)\s (a,b).$$
Thus $(m,0)$ is a $(\varphi,\varphi\circ T)-$inner mean for
$\mathcal{A}\times_T\mathcal{B}$ and so $(\varphi,\varphi\circ
T)-$inner amenability of $\mathcal{A}\times_T\mathcal{B}$ is
obtained.

Conversely suppose that $\mathcal{A}\times_T\mathcal{B}$ is
$(\varphi,\varphi\circ T)-$inner amenable. Thus there exists
$(m,n)\in\mathcal{A}''\times_{T''}\mathcal{B}''$ such that
\begin{equation}\label{e12}
\langle (m,n),(\varphi,\varphi\circ T)\rangle=1
\end{equation}
and for all $(a,b)\in\mathcal{A}\times_T\mathcal{B}$
$$(m,n)\square (a,b)=(a,b)\square (m,n).$$
So we obtain
$$
(m\s a+m\s\;T(b)+T''(n)\s a, n\s b)=(a\s m+a\s\;T''(n)+T(b)\s
m,b\s n).
$$
It follows that for all $a\in\mathcal A$ and $b\in\mathcal B$,
$$m\s a+m\s\;T(b)+T''(n)\s a =a\s m+a\s\;T''(n)+T(b)\s m.$$ Choosing $b=0$, we obtain
$$m\s a+T''(n)\s a =a\s m+a\s\;T''(n),$$ or equivalently
$$(m+T''(n))\s a =a\s (m+ T''(n)),$$ for each $a\in\mathcal{A}$. On
the other hand by \eqref{e12} we have
\begin{eqnarray*}
\langle m+T''(n),\varphi\rangle&=&\langle m,\varphi\rangle+\langle T''(n),\varphi\rangle\\
&=&\langle m,\varphi\rangle+\langle n,T'(\varphi)\rangle\\
&=&\langle m,\varphi\rangle+\langle n,\varphi\circ T \rangle\\
&=&1.
\end{eqnarray*}
Consequently $m+T''(n)$ is a $\varphi-$inner mean for $\mathcal A$
and therefore $\mathcal{A}$ is $\varphi-$inner amenable.
\end{proof}

\begin{theorem}\label{t2}
Let $\mathcal{A}$ and $\mathcal{B}$ be Banach algebras, $T\in
hom(\mathcal B,\mathcal A)$ and $\varphi\in\sigma(\mathcal A)$. If
$(m,n)$ is a $(\varphi,\varphi\circ T)-$inner mean for $\mathcal
A\times_T\mathcal B$ and $n(\varphi\circ T)\neq 0$, then
$\mathcal{B}$ is $\varphi\circ T-$inner amenable. Moreover if $T$
is epimorphism and $\mathcal{B}$ is $\varphi\circ T-$inner
amenable, then $\mathcal{A}\times_T\mathcal{B}$ is
$(\varphi,\varphi\circ T)-$inner amenable.
\end{theorem}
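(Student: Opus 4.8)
The plan is to treat the two assertions separately, writing $\psi=\varphi\circ T$ throughout and using the two product formulas
$$(m,n)\square (a,b)=(m\square a+m\square T(b)+T''(n)\square a,\; n\square b)$$
and
$$(a,b)\square (m,n)=(a\square m+a\square T''(n)+T(b)\square m,\; b\square n)$$
already recorded in the proof of Theorem \ref{t3}. For the first assertion, I would feed a $(\varphi,\psi)$-inner mean $(m,n)$ into the identity $(m,n)\square(a,b)=(a,b)\square(m,n)$ and read off the two coordinates. The second coordinate gives $n\square b=b\square n$ for every $b\in\mathcal B$ at once, with no contribution from $\mathcal A$. Since $n(\psi)\neq 0$, I would rescale and set $n_0=n/n(\psi)$; bilinearity of the Arens product preserves the commutation relation, and $n_0(\psi)=1$, so $n_0$ is a $\psi$-inner mean for $\mathcal B$. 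Hence $\mathcal B$ is $\varphi\circ T$-inner amenable.

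For the second assertion, the cleanest route is to reduce to Theorem \ref{t3}: I would manufacture a $\varphi$-inner mean for $\mathcal A$ out of a $\psi$-inner mean for $\mathcal B$, and then quote Theorem \ref{t3} to transport inner amenability back to $\mathcal A\times_T\mathcal B$. Concretely, let $n\in\mathcal B''$ satisfy $n(\psi)=1$ and $n\square b=b\square n$ for all $b\in\mathcal B$, and consider $T''(n)\in\mathcal A''$. The normalization is immediate, since $T''(n)(\varphi)=\langle n,T'(\varphi)\rangle=n(\varphi\circ T)=n(\psi)=1$. For the commutation relation $T''(n)\square a=a\square T''(n)$, I would write $a=T(b_0)$ with $b_0\in\mathcal B$, invoke that $T''$ is a homomorphism for the first Arens product to obtain $T''(n)\square T(b_0)=T''(n\square b_0)$ and $T(b_0)\square T''(n)=T''(b_0\square n)$, and then use $n\square b_0=b_0\square n$. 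Thus $T''(n)$ is a $\varphi$-inner mean for $\mathcal A$, so $\mathcal A$ is $\varphi$-inner amenable, and Theorem \ref{t3} delivers $(\varphi,\varphi\circ T)$-inner amenability of $\mathcal A\times_T\mathcal B$. Equivalently, I could verify directly that $(0,n)$ is a $(\varphi,\varphi\circ T)$-inner mean for the product, the coordinate computations collapsing to exactly these two facts.

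The step I expect to be the crux is precisely the passage $a=T(b_0)$ with $b_0\in\mathcal B$: this is where the hypothesis that $T$ is an epimorphism is genuinely needed. The commutation $n\square b_0=b_0\square n$ is only guaranteed for honest elements $b_0\in\mathcal B$, not for a general $\beta\in\mathcal B''$, so I must realize each $a\in\mathcal A$ as the image under $T$ of a point of $\mathcal B$; mere surjectivity of $T''$ onto $\mathcal A''$ would not suffice, since it would only give $a=T''(\beta)$ for some $\beta\in\mathcal B''$ for which the inner-mean relation is unavailable. Everything else is a routine expansion of the two Arens-product formulas together with the homomorphism property of $T''$ and the identity $T''(b)=T(b)$ for $b\in\mathcal B$, so no further obstacle is anticipated.
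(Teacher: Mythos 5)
Your proposal is correct, and its first half coincides exactly with the paper's argument: extract $n\square b=b\square n$ from the second coordinate of $(m,n)\square(a,b)=(a,b)\square(m,n)$ and rescale by $n(\varphi\circ T)\neq 0$. For the converse under surjectivity of $T$, your primary route differs from the paper's in packaging only: the paper directly checks that $(0,n)$ is a $(\varphi,\varphi\circ T)$-inner mean for $\mathcal A\times_T\mathcal B$, computing $(0,n)\square(a,b)=(T''(n)\square a,\,n\square b)$ and $(a,b)\square(0,n)=(a\square T''(n),\,b\square n)$ and then showing $T''(n)\square a=a\square T''(n)$ by writing $a=T(b')$; you instead promote $T''(n)$ to a $\varphi$-inner mean for $\mathcal A$ (normalization $T''(n)(\varphi)=n(\varphi\circ T)=1$, commutation by the same computation) and then invoke the forward direction of Theorem \ref{t3}, which produces the mean $(T''(n),0)$ rather than $(0,n)$. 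The crux is identical in both: the factorization $a=T(b_0)$ with $b_0$ an honest element of $\mathcal B$, combined with $T''$ being a homomorphism for $\square$ and $T''(b_0)=T(b_0)$, so that $T''(n)\square T(b_0)=T''(n\square b_0)=T''(b_0\square n)=T(b_0)\square T''(n)$. Your remark that surjectivity of $T''$ alone would not suffice (since the commutation $n\square\beta=\beta\square n$ is unavailable for general $\beta\in\mathcal B''$) is exactly the right diagnosis of where the epimorphism hypothesis on $T$ is used, and you even note the paper's direct $(0,n)$ verification as an equivalent alternative; what your detour through Theorem \ref{t3} buys is the slightly stronger intermediate conclusion that $\mathcal A$ itself is $\varphi$-inner amenable whenever $T$ is onto and $\mathcal B$ is $\varphi\circ T$-inner amenable, a fact the paper's proof does not isolate.
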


\begin{proof}
Suppose that $(m,n)$ is a $(\varphi,\varphi\circ T)-$inner mean
for $\mathcal A\times_T\mathcal B$ such that $n(\varphi\circ
T)\neq 0$. Thus $\langle (m,n),(\varphi,\varphi\circ T)\rangle=1$
and
$$(m,n)\square (a,b)=(a,b)\square (m,n)\ \ \ \ \big((a,b)\in\mathcal{A}\times_T\mathcal{B}\big).$$
Consequently we obtain
$$
(m\s a+m\s T(b)+T''(n)\s a, n\s b)=(a\s m+a\s T''(n)+T(b)\s m,b\s
n).
$$
It follows that $n\s b=b\s n$, for all $b\in\mathcal B$. Since
$n(\varphi\circ T)\neq 0$, it follows that
$$\left({\frac{n}{n(\varphi\circ T)}}\right)(\varphi\circ T)=1.$$ Moreover
for each $b\in\mathcal{B}$, we have
$$b\s\left({\frac{n}{n(\varphi\circ T)}}\right)=\left(\frac{n}{n(\varphi\circ
T)}\right)\s b.$$ It follows that
$\displaystyle{\frac{n}{n(\varphi\circ T)}}$ is a $(\varphi\circ
T)-$inner mean for $\mathcal{B}$, which implies that $\mathcal B$
is $(\varphi\circ T)$-inner amenable.

Now suppose that $T$ is epimorphism and $\mathcal{B}$ is
$\varphi\circ T$-inner amenable. Thus there exists
$n\in\mathcal{B}''$ such that $n(\varphi\circ T)=1$ and $n\s b=b\s
n$, for each $b\in\mathcal{B}$. Take
$(0,n)\in\mathcal{A}''\times_{T''}\mathcal{B}''$. Thus $$\langle
(0,n), (\varphi,\varphi\circ T)\rangle=n(\varphi\circ T)=1.$$ On
the other hand, for each $(a,b)\in\mathcal{A}\times_T\mathcal{B}$,
we have
$$\left\{\begin{array}{ll}
(0,n)\s (a,b) = (T''(n)\s a,n\s b), \\
(a,b)\s (0,n) = (a\s\;T''(n),b\s n).
\end{array}\right.
$$
Since $T$ is onto, for each $a\in\mathcal{A}$ there exists
$b'\in\mathcal{B}$ such that $T(b')=a$. Consequently
$$T''(n)\s a =T''(n)\s T(b')=T''(n\s b')=T''(b'\s n)=T(b')\s T''(n)=a\s T''(n).$$
These observations show that $$(0,n)\s (a,b) = (a,b)\s (0,n).$$
Thus $(0,n)$ is a $(\varphi,\varphi\circ T)-$inner mean for
$\mathcal{A}\times_T\mathcal{B}$ and therefore
$\mathcal{A}\times_T\mathcal{B}$ is $(\varphi,\varphi\circ
T)-$inner amenable.
\end{proof}

\begin{theorem}\label{abvb}
Let $\mathcal{A}$ and $\mathcal{B}$ be Banach algebras, $T\in
hom(\mathcal B,\mathcal A)$ and $\psi\in\sigma(\mathcal B)$. Then
$\mathcal{A}\times_T\mathcal{B}$ is $(0,\psi)$-inner amenable if
and only if $\mathcal{B}$ is $\psi$-inner amenable.
\end{theorem}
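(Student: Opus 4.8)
The plan is to prove both implications by expanding the commutation relation $(m,n)\square(a,b)=(a,b)\square(m,n)$ on $\mathcal{A}''\times_{T''}\mathcal{B}''$ and then reading off, and matching, its two coordinates. Using the identification $(\mathcal{A}\times_T\mathcal{B})''\cong\mathcal{A}''\times_{T''}\mathcal{B}''$ and the first Arens product formula established earlier (together with $T''(b)=T(b)$), this relation for $(a,b)\in\mathcal{A}\times_T\mathcal{B}$ splits into the $\mathcal{A}''$-component equation
$$m\square a + m\square T(b) + T''(n)\square a = a\square m + a\square T''(n) + T(b)\square m$$
and the $\mathcal{B}''$-component equation $n\square b = b\square n$, while the normalization $\langle(m,n),(0,\psi)\rangle=1$ reduces to $n(\psi)=1$, since the $\mathcal{A}'$-slot of the character $(0,\psi)$ is zero.

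For the forward direction I would argue directly. If $\mathcal{A}\times_T\mathcal{B}$ is $(0,\psi)$-inner amenable, choose such an $(m,n)$; the $\mathcal{B}''$-component equation already delivers $n\square b = b\square n$ for all $b\in\mathcal{B}$, and we have $n(\psi)=1$. Hence $n$ is a $\psi$-inner mean for $\mathcal{B}$, so $\mathcal{B}$ is $\psi$-inner amenable. Note that this implication requires no hypothesis whatsoever on $T$.

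For the converse, given a $\psi$-inner mean $n\in\mathcal{B}''$ (so $n(\psi)=1$ and $n\square b = b\square n$ for all $b$), the candidate inner mean is $(-T''(n),n)$, mirroring the element $(-T''(\Psi),\Psi)$ appearing in Theorem \ref{t1}(ii). The normalization $\langle(-T''(n),n),(0,\psi)\rangle=n(\psi)=1$ is immediate, and the $\mathcal{B}''$-component equation holds by hypothesis. The substance is the $\mathcal{A}''$-component equation: putting $m=-T''(n)$ collapses its left side to $-T''(n)\square T(b)$ and its right side to $-T(b)\square T''(n)$, so it remains to prove $T''(n)\square T(b) = T(b)\square T''(n)$. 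Here I would invoke that $T''$ is a homomorphism for $\square$ and $T''(b)=T(b)$, writing $T''(n)\square T(b)=T''(n\square b)$ and $T(b)\square T''(n)=T''(b\square n)$; the relation $n\square b = b\square n$ then closes the argument.

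The main obstacle, and the reason the statement is worth recording separately, is precisely this converse. In Theorem \ref{t2} the analogous construction took $m=0$ and relied on surjectivity of $T$ to force $T''(n)\square a = a\square T''(n)$ for every $a\in\mathcal{A}$; no such epimorphism hypothesis is available here, so the $a$-dependent cross terms $m\square a + T''(n)\square a$ and $a\square m + a\square T''(n)$ cannot simply be dropped. The device of choosing $m=-T''(n)$ is exactly what cancels those cross terms against one another, leaving only the $T(b)$-terms, which the homomorphism property of $T''$ settles. This is why the equivalence holds for an arbitrary $T$, with no surjectivity assumption.
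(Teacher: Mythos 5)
Your proposal is correct and follows essentially the same route as the paper's own proof: the forward direction reads off the $\mathcal{B}''$-coordinate of the commutation relation together with the normalization $n(\psi)=1$, and the converse uses the identical candidate inner mean $(-T''(n),n)$, with the cross terms in the $\mathcal{A}''$-coordinate cancelling and the identity $T''(n)\square T(b)=T''(n\square b)=T''(b\square n)=T(b)\square T''(n)$ (via $T''$ being a $\square$-homomorphism and $T''(b)=T(b)$) closing the argument. Your closing observation that no surjectivity of $T$ is needed here, in contrast to Theorem \ref{t2}, is also accurate.
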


\begin{proof}
Assume that $\mathcal{A}\times_T\mathcal{B}$ is $(0,\psi)$-inner
amenable. Thus there exists
$(m,n)\in\mathcal{A}''\times_{T''}\mathcal{B}''$ such that
$$\langle (m,n) , (0,\psi)\rangle=n(\psi)=1$$ and for each
$(a,b)\in\mathcal{A}\times_T\mathcal{B}$,
$$(a,b)\s (m,n)=(m,n)\s (a,b).$$
Consequently $b\s n=n\s b$, for each $b\in\mathcal{B}$. It follows
that $n$ is a $\psi-$inner mean for $\mathcal B$ and so
$\mathcal{B}$ is $\psi$-inner amenable. Conversely, suppose that
$\mathcal{B}$ is $\psi$-inner amenable. Thus there exists
$n\in\mathcal{B}''$ such that $n(\psi)=1$ and $b\s n=n\s b$, for
all $b\in\mathcal{B}$. Take $(-T''(n),n)\in
\mathcal{A}''\times_{T''}\mathcal{B}''$. Thus
$$\langle (-T''(n),n) , (0,\psi)\rangle=n(\psi)=1.$$ Moreover for
all $a\in\mathcal A$ and $b\in\mathcal B$
\begin{eqnarray*}
(a,b)\s(-T''(n),n) &=& (-a\s T''(n)+a\s T''(n)-T(b)\s T''(n),b\s n )\\
&=& (-T(b)\s T''(n), b\s n),
\end{eqnarray*}
and
\begin{eqnarray*}
(-T''(n), n)\s (a,b) &=& (-T''(n)\s a-T''(n)\s T(b)+T''(n)\s a, n\s b)\\
&=& (-T''(n)\s T(b), n\s b).
\end{eqnarray*}
Note that $$T(b)\s T''(n)=T''(b\s n)=T''(n\s b)=T''(n)\s T(b).$$
Consequently
$$
(a,b)\s(-T''(n),n)=(-T''(n), n)\s (a,b).
$$
Thus $(-T''(n),n)$ is a $(0,\psi)-$inner mean for
$\mathcal{A}\times_T\mathcal{B}$, which implies that
$\mathcal{A}\times_T\mathcal{B}$ is $(0,\psi)$-inner amenable, as
claimed.
\end{proof}

In \cite[Theorem 4.2, Part (3)]{BD}, it has been proved that
$\mathcal{A}\times_T\mathcal{B}$ is character inner amenable if
and only if $\mathcal B$ is character inner amenable. In fact
since $\mathcal A$ is assumed to be commutative, thus $\mathcal A$
is spontaneously character inner amenable. Note that in the proof
of this result, the identification
${(\mathcal{A}\times_T\mathcal{B})}\cong
{\mathcal{A}}''\times_{T''}{\mathcal{B}}''$ is used. Thus by
\cite[Theorem 3.1]{BD}, in the assumption of \cite[Theorem 4.2,
Part (3)]{BD} in fact $\mathcal A$ should be commutative and Arens
regular. We state here the main result of the present section,
which is a generalization of \cite[Theorem 4.2, Part (3)]{BD},
with respect to the definition \eqref{e7} and for an arbitrary
Banach algebra $\mathcal A$.

\begin{theorem}
Let $\mathcal{A}$ and $\mathcal{B}$ be Banach algebras and $T\in
hom(\mathcal B,\mathcal A)$. Then $\mathcal{A}\times_T\mathcal{B}$
is character inner amenable if and only if both $\mathcal{A}$ and
$\mathcal{B}$ are character inner amenable.
\end{theorem}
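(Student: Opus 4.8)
The plan is to reduce the statement entirely to the character-space decomposition recorded earlier together with the two preceding theorems, so that essentially no new computation is required. Recall that $\sigma(\mathcal{A}\times_T\mathcal{B})$ splits as the disjoint union $\{(\varphi,\varphi\circ T):\varphi\in\sigma(\mathcal{A})\}\cup\{(0,\psi):\psi\in\sigma(\mathcal{B})\}$. Hence $\mathcal{A}\times_T\mathcal{B}$ is character inner amenable precisely when it is $(\varphi,\varphi\circ T)$-inner amenable for every $\varphi\in\sigma(\mathcal{A})$ and $(0,\psi)$-inner amenable for every $\psi\in\sigma(\mathcal{B})$. The entire argument will consist of matching these two families of characters against $\sigma(\mathcal{A})$ and $\sigma(\mathcal{B})$ via the earlier equivalences.

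For the family indexed by $\varphi\in\sigma(\mathcal{A})$, I would invoke Theorem \ref{t3}, which asserts that $\mathcal{A}$ is $\varphi$-inner amenable if and only if $\mathcal{A}\times_T\mathcal{B}$ is $(\varphi,\varphi\circ T)$-inner amenable. Quantifying over all $\varphi\in\sigma(\mathcal{A})$ turns this pointwise equivalence into the statement that $\mathcal{A}$ is character inner amenable if and only if $\mathcal{A}\times_T\mathcal{B}$ is inner amenable at every character of the first type. Symmetrically, for the family indexed by $\psi\in\sigma(\mathcal{B})$, Theorem \ref{abvb} gives that $\mathcal{A}\times_T\mathcal{B}$ is $(0,\psi)$-inner amenable if and only if $\mathcal{B}$ is $\psi$-inner amenable; quantifying over $\psi\in\sigma(\mathcal{B})$ yields that $\mathcal{B}$ is character inner amenable if and only if $\mathcal{A}\times_T\mathcal{B}$ is inner amenable at every character of the second type.

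Combining these, the forward direction proceeds by assuming $\mathcal{A}\times_T\mathcal{B}$ is character inner amenable, restricting the inner-amenability hypothesis to the two disjoint families of characters, and reading off $\varphi$-inner amenability of $\mathcal{A}$ (from Theorem \ref{t3}) and $\psi$-inner amenability of $\mathcal{B}$ (from Theorem \ref{abvb}) for all relevant $\varphi$ and $\psi$. The converse assumes both $\mathcal{A}$ and $\mathcal{B}$ are character inner amenable and runs the same two equivalences in the opposite direction to produce an inner mean at every character of $\mathcal{A}\times_T\mathcal{B}$; since the decomposition is exhaustive, this establishes character inner amenability of the product.

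The noteworthy point, rather than a genuine obstacle, is that both Theorem \ref{t3} and Theorem \ref{abvb} are full equivalences valid for an arbitrary $T$, with no epimorphism hypothesis; in particular Theorem \ref{t2}, whose converse requires $T$ surjective, is deliberately not invoked here. Thus the only thing to verify with care is that the two character families are exactly $\sigma(\mathcal{A})$ and $\sigma(\mathcal{B})$ in disguise and that they jointly exhaust $\sigma(\mathcal{A}\times_T\mathcal{B})$, so that quantifying over all characters of the product is equivalent to quantifying separately over $\sigma(\mathcal{A})$ and $\sigma(\mathcal{B})$.
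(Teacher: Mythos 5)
Your proposal is correct and follows essentially the same route as the paper: the paper's proof likewise uses the disjoint decomposition of $\sigma(\mathcal{A}\times_T\mathcal{B})$ into characters of the forms $(\varphi,\varphi\circ T)$ and $(0,\psi)$, and then applies Theorem \ref{t3} and Theorem \ref{abvb} in both directions, never invoking Theorem \ref{t2}. Your observation that the epimorphism hypothesis is deliberately avoided matches the paper's structure exactly.
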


\begin{proof}
Assume that $\mathcal{A}\times_T\mathcal{B}$ is character inner
amenable. Thus $\mathcal{A}\times_T\mathcal{B}$ is
$(\varphi,\varphi\circ T)$-inner amenable, for all
$\varphi\in\sigma(\mathcal{A})$. By Theorem \ref{t3},
$\mathcal{A}$ is $\varphi$-inner amenable, for all
$\varphi\in\sigma(\mathcal A)$ and consequently $\mathcal{A}$ is
character inner amenable. Now suppose that
$\psi\in\sigma(\mathcal{B})$. By the hypothesis,
$\mathcal{A}\times_T\mathcal{B}$ is $(0,\psi)$-inner amenable and
so $\mathcal{B}$ is $\psi$-inner amenable, by Theorem \ref{abvb}.
It follows that $\mathcal{B}$ is character inner amenable.

Conversely, suppose that $\mathcal A$ and $\mathcal B$ are
character inner amenable. Thus $\mathcal A$ is $\varphi-$inner
amenable, for all $\varphi\in\sigma(\mathcal A)$ and by Theorem
\ref{t3}, $\mathcal{A}\times_T\mathcal{B}$ is
$(\varphi,\varphi\circ T)-$inner amenable. Moreover $\mathcal B$
is $\psi-$inner amenable, for all $\psi\in\sigma(\mathcal B)$.
Thus by Theorem \ref{abvb}, $\mathcal{A}\times_T\mathcal{B}$ is
$(0,\psi)$-inner amenable. Therefore
$\mathcal{A}\times_T\mathcal{B}$ is character inner amenable.
\end{proof}

{\bf Acknowledgement.} This research was partially supported by
the Banach algebra Center of Excellence for Mathematics,
University of Isfahan.

\bibliographystyle{amsplain}

\end{document}